\documentclass[12pt,leqno]{amsart}
\topmargin=0.02cm
\textwidth =  17cm
\textheight = 23cm
\baselineskip=11pt
\setlength{\oddsidemargin}{0.01 pt}
\setlength{\evensidemargin}{0.01 pt}

\usepackage{amsmath,amsfonts,amssymb,amsthm}
\usepackage{graphicx}
\graphicspath{ }
\usepackage{wrapfig}
\usepackage{accents}
\usepackage{caption}
\usepackage{subcaption}
\usepackage{calligra}

\numberwithin{figure}{section}

\theoremstyle{plain}
\newtheorem{thm}{Theorem}[section]

\newtheorem{prop}[thm]{Proposition}
\newtheorem{cor}{Corollary}[thm]
\theoremstyle{definition}

\theoremstyle{remark}

\usepackage{mathtools}
\title[Yamabe soliton and positive scalar curvature]{Non-existence of Riemannian metric satisfying Yamabe soliton}
\author[A. A. Shaikh, C. K. Mondal ]{Absos Ali Shaikh$^1$, Chandan Kumar Mondal$^2$ }
\address{\noindent\newline $^{1}$Department of Mathematics,\newline University of
Burdwan, Golapbag,\newline Burdwan-713104,\newline West Bengal, India}
\email{aask2003@yahoo.co.in, aashaikh@math.buruniv.ac.in}
\address{\noindent\newline $^{2}$School of sciences,\newline Netaji Subhas Open University, Durgapur-713214, \newline West Bengal, India}
\email{chan.alge@gmail.com}

\begin{document}
\begin{abstract}
In this paper we have proved that a compact Riemannian manifold does not admit a metric with positive scalar curvature if there exists a real valued function in this manifold which is strictly positive along a geodesic ray satisfying expanding or steady Yamabe soliton. We have also deduced a relation between scalar curvature and surface area of a geodesic ball in a Riemannian manifold with a pole satisfying steady Yamabe soliton.
\end{abstract}
\noindent\footnotetext{$\mathbf{2010}$\hspace{5pt}Mathematics\; Subject\; Classification: 53C20; 53C21; 53C44.\\ 
{Key words and phrases: Yamabe soliton, scalar curvature, Riemannian metric, Riemannian manifold. } }
\maketitle
\section{Introduction and preliminaries}
One of the most central problem of Riemannian geometry is that: given any smooth real valued function on the manifold is there any Riemannian metric whose associated scalar curvature is that function? This is a very broad question and motivated an enormous amount of researchers over the years. In case of compact manifold, Kazdan and Warner \cite{KW75} proved the existence of such a Riemannian metric when the function is negative. A good amount of research have been done by several authors when the manifold admits a positive scalar curvature, see \cite{GL80,ST92} and in case of negative scalar curvature, see \cite{BK89,LO92}. The conformal version of the above question is that: given a Riemannian manifold with Riemannian metric $g$ and a smooth real valued function on the manifold, is there any Riemannian metric conformal to $g$ whose scalar curvature is that function? When the manifold is compact and given function is negative or zero, Aubin \cite{AU70} showed that there exists such a metric and when the function is positive somewhere then Berger \cite{BE71} has given a solution.
\par The notion of Yamabe flow has been introduced by R. Hamilton \cite{HA88} in which the metric is evolved accordingly as
$$\frac{\partial}{\partial t}g(t)=-R(t)g(t),$$
where $R(t)$ is the scalar curvature of then metric $g(t)$. A connected Riemannian manifold $(M,g)$ with $n\geq 2$ is called Yamabe soliton if there exists a vector field such that
\begin{equation}\label{eq4}
\frac{1}{2}\mathsterling_Xg=(R-\rho)g,
\end{equation}
where $\mathsterling_X$ denotes the Lie derivative in the direction of $X$ and $\rho$ is a scalar. The Yamabe soliton is called expanding, steady and contracting if $\rho<0$, $\rho=0$ and $\rho>0$ respectively. If there exists a function $\varphi\in C^\infty(M)$ such that $X=grad \varphi$, then it is called gradient Yamabe soliton and then equation (\ref{eq4}) takes the form
\begin{equation}\label{eq5}
\frac{1}{2}\nabla^2 \varphi=(R-\rho)g,
\end{equation}
where $\nabla^2$ is the Hessian operator. The concept of pole in a Riemannian manifold was introduced by Gromoll and Meyer \cite{GM69}. A point $p\in M$ of a Riemannian manifold is called a pole \cite{GM69} if the tangent space $T_pM$ at the point $p$ is diffeomorphic to $M$. A smooth function $\varphi:M\rightarrow\mathbb{R}$ is said to be convex \cite{CK19, 8} if for any $p\in M$ and for any vector $v\in T_pM$
$$\left\langle grad \varphi,v\right\rangle_p\leq \varphi(exp_pv)-\varphi(p).$$
\par In section 2, we have proved that if there exists a strictly positive function along a geodesic ray in a compact Riemannian manifold, satisfying expanding or steady Yamabe soliton, then the manifold does not admit any Riemannian metric with positive scalar curvature. In the last section, we have deduced a relation between scalar curvature and surface area of geodesic ball in a complete Riemannian manifold with a pole satisfying steady gradient Yamabe soliton.
\section{Riemannian metric and Yamabe soliton}
The main result of this paper is the following:
\begin{thm}\label{th3}
Let $M$ be a compact Riemannian manifold of dimension $n\geq 3$ and $f\in C^\infty(M)$. Suppose there exists $\delta\in\mathbb{R}$ such that $f\geq\delta>0$ along some geodesic ray satisfying the equation
\begin{equation}\label{eq1}
\frac{1}{2}\mathsterling_Xg=(f-\rho)g, \text{ for }\rho\leq 0.
\end{equation}
Then $M$ does not admit any Riemannian metric with positive scalar curvature.
\end{thm}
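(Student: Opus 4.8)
The plan is to reduce the global soliton equation \eqref{eq1} to a one-dimensional growth statement along the given geodesic ray and then play this off against the compactness of $M$. Let $\gamma:[0,\infty)\to M$ be the unit-speed geodesic ray along which $f\geq\delta>0$ (such a ray is defined on all of $[0,\infty)$ because a compact manifold is geodesically complete), and let $X$ be the field furnished by \eqref{eq1}. First I would feed the tangent direction $\dot\gamma$ into both slots of \eqref{eq1}. Using $\tfrac12(\mathsterling_Xg)(\dot\gamma,\dot\gamma)=g(\nabla_{\dot\gamma}X,\dot\gamma)$ together with $g(\dot\gamma,\dot\gamma)=1$, the equation becomes the scalar identity $g(\nabla_{\dot\gamma}X,\dot\gamma)=f(\gamma(t))-\rho$.

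The decisive observation is that the left-hand side is the derivative of $h(t):=g\big(X,\dot\gamma(t)\big)$ along $\gamma$. Indeed, since $\gamma$ is a geodesic we have $\nabla_{\dot\gamma}\dot\gamma=0$, whence $h'(t)=g(\nabla_{\dot\gamma}X,\dot\gamma)=f(\gamma(t))-\rho$. The sign hypotheses now enter decisively: $f\circ\gamma\geq\delta$ and $\rho\leq0$ force $h'(t)\geq\delta>0$ for all $t$, and integrating yields the linear lower bound $h(t)\geq h(0)+\delta t\to+\infty$. On the other hand, compactness makes $C:=\max_{M}|X|_g$ finite, and Cauchy--Schwarz with $|\dot\gamma|=1$ gives $|h(t)|\leq C$ for every $t$. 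These two facts are irreconcilable, so the assumed soliton data cannot exist on a compact $M$ once $f$ is pinned below by a positive constant along a ray.

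The step I expect to be the crux is turning this incompatibility into the stated topological conclusion, since the estimate above concerns the single triple $(g,X,f)$ while the conclusion ranges over all Riemannian metrics on $M$. Two ingredients seem natural here. Tracing \eqref{eq1} and integrating over $M$ gives $\int_M(f-\rho)\,dV=0$, so $\int_M f\,dV=\rho\,\mathrm{Vol}(M)\leq0$; thus $f$ is forced to be nonpositive on a set of positive measure even though it is strictly positive along the ray. I would then try to combine this with the Kazdan--Warner trichotomy for closed manifolds of dimension $n\geq3$ cited in the introduction---in which admitting a metric of positive scalar curvature is equivalent to every smooth function being realizable as a scalar curvature---to argue that the soliton obstruction is incompatible with membership in the positive class. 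Pinning down this last implication cleanly, rather than merely producing the growth contradiction, is the delicate point and the place where I would concentrate the main technical effort.
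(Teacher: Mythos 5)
Your opening computation is precisely the paper's: the paper also evaluates (\ref{eq1}) on $(\gamma',\gamma')$, identifies $g(\nabla_{\gamma'}X,\gamma')$ with $\frac{d}{dt}[g(X,\gamma')]$, and plays the resulting linear growth (from $f\geq\delta$, $\rho\leq 0$) against the compactness bound $\|X\|\leq K$; your differential formulation with $h(t)=g(X,\dot\gamma(t))$ and the paper's integrated formulation are the same argument. Where you diverge is exactly at the step you flag as the crux. The paper does not use your trace identity at all; it runs the entire proof as a reductio: assume $M$ admits a metric of positive scalar curvature, invoke Kazdan--Warner (Theorem \ref{th1}) to produce a metric $g$ whose scalar curvature $R$ equals $f$, read the hypothesis (\ref{eq1}) as holding for \emph{that} metric, and let the growth contradiction refute the positive-scalar-curvature assumption.

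Two further points. First, your sketched completion is a dead end: tracing (\ref{eq1}) and integrating does give $\int_M f\,dV=\rho\,\mathrm{Vol}(M)\leq 0$, hence that $f$ is negative somewhere; but by Theorem \ref{th2} a function that is negative somewhere is the scalar curvature of some metric on \emph{every} closed manifold of dimension $n\geq 3$, whether or not it carries positive scalar curvature, so no obstruction to membership in the positive class can be extracted from that inequality. Second --- and this resolves your worry --- no completion is actually needed. Your contradiction uses only the soliton equation, the bound $f\geq\delta$ along the ray, and compactness; it never matters whether $f$ is the scalar curvature of anything. You have therefore shown that the hypotheses of the theorem cannot be satisfied on any compact manifold at all, so the stated implication holds vacuously. (The same remark applies to the paper's own proof: the contradiction it derives nowhere uses that $R$ is the scalar curvature of the Kazdan--Warner metric, so the positive-scalar-curvature assumption does no genuine logical work there either; it only serves, via Theorem \ref{th1}, to attach the label ``scalar curvature'' to $f$.) In short, your proof becomes complete the moment you add this observation, and the ``delicate point'' you planned to concentrate on is not where the content lies.
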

\begin{thm}\cite{KW75}\label{th2}
Let $M$ be a compact Riemannian manifold of dimension $n\geq 3$. If $f\in C^\infty(M)$ is negative somewhere, then there is a $C^\infty$ Riemannian metric on $M$ with $f$ as its scalar curvature.
\end{thm}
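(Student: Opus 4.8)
The plan is to realize $f$ as the scalar curvature of an explicitly constructed metric, using the conformal (Yamabe-type) reduction as the main analytic engine. I would first fix a convenient background metric: by the circle of ideas around Aubin's theorem \cite{AU70}, every compact manifold of dimension $n\ge 3$ carries a metric $g_0$ whose scalar curvature is a negative constant, so I may assume $R_{g_0}\equiv -1$. Writing a conformal metric as $g=u^{4/(n-2)}g_0$ with $u>0$, the transformation law for scalar curvature gives $R_g=u^{-N}\big(-a\,\Delta_{g_0}u+R_{g_0}u\big)$, where $a=\frac{4(n-1)}{n-2}$ and $N=\frac{n+2}{n-2}$; hence prescribing $R_g=f$ is equivalent to solving the semilinear elliptic equation
\[
-a\,\Delta_{g_0}u+R_{g_0}u=f\,u^{N},\qquad u>0 .
\]
The problem is thereby reduced to producing a positive smooth solution $u$, after which elliptic regularity makes $u$ smooth and the strong maximum principle keeps it strictly positive.

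For the first, easy regime I would treat the case $f\le 0$ (in particular $f$ negative everywhere), which already covers realizing the negative constant used as background. Here the method of sub- and supersolutions with constant barriers succeeds: a sufficiently small constant is a subsolution and a sufficiently large constant a supersolution of the displayed equation, because the nonlinearity $f\,u^{N}$ has the favourable sign and $R_{g_0}<0$ provides coercivity; the monotone iteration scheme then converges to a positive solution. This is the standard negative-case argument and presents no essential difficulty.

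The main obstacle is the case in which $f$ is positive somewhere while still negative somewhere. Constant barriers now fail, and where $f>0$ the term $f\,u^{N}$ is superlinear at the critical Sobolev exponent, which is precisely the hard Yamabe-type regime; indeed a purely conformal deformation of a negative background need not reach such an $f$ at all. My plan is therefore to leave the conformal class and exploit the full flexibility of the space of metrics. The key point is that, because $R_{g_0}<0$, the linearized scalar-curvature operator $\gamma_{g_0}$ is surjective: its formal $L^2$-adjoint is $\gamma_{g_0}^{*}\phi=\nabla^2\phi-(\Delta_{g_0}\phi)\,g_0-\phi\,\mathrm{Ric}_{g_0}$, and any $\phi$ in its kernel satisfies, after taking the trace, $\Delta_{g_0}\phi=\frac{1}{n-1}\phi$; since the geometer's Laplacian $\Delta_{g_0}$ has nonpositive spectrum while $\frac{1}{n-1}>0$, this forces $\phi\equiv 0$. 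Thus $g\mapsto R_g$ is a submersion near $g_0$, so scalar curvature carries no local obstruction and arbitrary (in particular positive) values can be produced by compactly supported non-conformal deformations on the open set $\{f>0\}$, while $\{f<0\}$ is handled conformally as above.

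The hardest part of the argument is exactly this crossing of the sign change of $f$: one must glue the localized non-conformal deformation that creates the positive part of $f$ to the conformal construction that creates the negative part, and then run an openness/continuity argument, deforming along a path of target functions each of which stays negative somewhere, to upgrade local solvability to a global metric with $R_g=f$ everywhere. Controlling this deformation and the attendant a priori bounds past the critical-exponent difficulty is where the real work lies; once it is in place, elliptic regularity and the maximum principle finish the proof by producing a smooth metric with scalar curvature exactly $f$.
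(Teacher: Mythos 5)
The paper gives no proof of this statement---it is quoted verbatim from Kazdan--Warner \cite{KW75}---so your attempt has to be judged against their argument. Your setup is the right one: fix a background $g_0$ with $R_{g_0}\equiv -1$ and reduce to the semilinear equation $-a\,\Delta_{g_0}u - u = f\,u^{N}$. The everywhere-negative case by constant barriers is essentially correct (though note a constant \emph{supersolution} needs $f<0$ everywhere, not just $f\le 0$). The gap is in the sign-changing case, which is the entire content of the theorem. Your kernel computation for $\gamma_{g_0}^{*}$ is fine, but surjectivity of the linearization at $g_0$ only makes $g\mapsto R_g$ an open map near $g_0$: the inverse function theorem produces metrics whose scalar curvature is a \emph{small perturbation} of $-1$, not ``arbitrary (in particular positive) values'' on the set $\{f>0\}$. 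To reach a target function that is large and positive somewhere you must travel far from $g_0$, and your proposed continuity/deformation argument requires closedness, i.e.\ a priori bounds on solutions at the critical Sobolev exponent. That compactness is precisely the known obstruction in prescribing scalar curvature; you state yourself that this is ``where the real work lies,'' and nothing in the sketch supplies it. As written, the proposal is not a proof.

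Kazdan and Warner avoid this hard regime entirely by a diffeomorphism trick that your sketch misses. Since $R_{\phi^{*}g}=R_g\circ\phi$ for any diffeomorphism $\phi$, it suffices to realize $f\circ\phi$ as a scalar curvature for \emph{some} $\phi$. Their approximation lemma shows that any $f$ which is negative somewhere can be composed with a diffeomorphism so that $\{f\circ\phi\ge 0\}$ has arbitrarily small measure; one then solves the conformal equation $-a\,\Delta_{g_0}u - u=(f\circ\phi)\,u^{N}$ over the $R_{g_0}\equiv-1$ background by sub/supersolutions combined with $L^{p}$ estimates, which succeed exactly because the positive part of the target lives on a set of small measure. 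Pulling the resulting metric back by $\phi^{-1}$ finishes the proof. This route stays entirely inside the conformal method, needs no continuity argument, no critical-exponent compactness, and no gluing of non-conformal deformations; if you want to salvage your write-up, replacing your third and fourth paragraphs with this approximation-by-diffeomorphism step is the way to do it.
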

\begin{thm}\cite{KW75}\label{th1}
Let $M$ be a compact Riemannian manifold of dimension $n\geq 3$. If $M$ admits a metric whose scalar curvature is positive, then any $f\in C^\infty(M)$ is the scalar curvature of some Riemannian metric on $M$.
\end{thm}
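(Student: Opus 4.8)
The plan is to reduce the problem to the solvability of a single semilinear elliptic equation inside a conformal class, feeding the positivity hypothesis in only through the sign of the conformal Laplacian. Set $c_n=\frac{4(n-1)}{n-2}$ and $p=\frac{2n}{n-2}$, and adopt the convention that $-\Delta$ is a nonnegative operator. For $u\in C^\infty(M)$ with $u>0$, the metric $\tilde g=u^{p-2}g_0$ has scalar curvature
\begin{equation}
R_{\tilde g}=u^{1-p}\big(-c_n\Delta_{g_0}u+R_{g_0}\,u\big),
\end{equation}
so realizing a prescribed $f$ conformally to $g_0$ is equivalent to solving
\begin{equation}\label{eqstar}
-c_n\Delta_{g_0}u+R_{g_0}\,u=f\,u^{p-1},\qquad u>0.
\end{equation}
The hypothesis enters as follows: if $g_0$ carries $R_{g_0}>0$, the conformal Laplacian $L_{g_0}=-c_n\Delta_{g_0}+R_{g_0}$ has strictly positive least eigenvalue $\lambda_1$, and the sign of $\lambda_1$ is a conformal invariant; replacing $g_0$ by $\phi_1^{\,p-2}g_0$, with $\phi_1>0$ the first eigenfunction, I may assume $R_{g_0}>0$ everywhere.

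First I would dispose of every $f$ that is negative somewhere: by Theorem \ref{th2} such an $f$ is already the scalar curvature of some metric on $M$, with no use of the hypothesis. The real content is therefore the case $f\ge 0$, in which the positivity of $R_{g_0}$ is indispensable; indeed on the sphere no nonnegative function can arise from a metric of nonpositive scalar curvature.

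For $f\ge0$ I would solve \eqref{eqstar} by the method of continuity along the affine path $f_t=f-(1-t)C$, $t\in[0,1]$, with $C>\max_M f$, so that $f_1=f$ while $f_t<0$ somewhere for small $t$. Let $T=\{t\in[0,1]:\ \eqref{eqstar}\text{ for }f_t\text{ has a positive solution}\}$. A whole interval $[0,t_0)\subset T$ by Theorem \ref{th2}. The linearization of \eqref{eqstar} at a solution $u$ is the self-adjoint operator $L_{g_0}-(p-1)f_t\,u^{p-2}$; while $f_t\le0$ this dominates $L_{g_0}>0$ and is invertible, and for $f_t>0$ it has trivial kernel except at finitely many $t$, which are crossed by a Leray--Schauder degree continuation. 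Positivity of any solution is preserved by the strong maximum principle, so the implicit function theorem keeps $T$ open.

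The hard part will be closedness of $T$ at the values of $t$ where $f_t$ turns positive. There the exponent $p-1=\frac{n+2}{n-2}$ is the critical Sobolev exponent, so the a priori upper bounds for \eqref{eqstar} needed to pass to the limit are delicate and require a blow-up analysis exactly of Yamabe type; moreover at the round metric on $S^n$ the genuine Kazdan--Warner obstruction appears, reflected analytically in the nonvanishing static potentials solving $\nabla^2u+u\,g=0$. I would remove this last obstruction by a preliminary \emph{non-conformal} deformation of the background $g_0$ --- permissible because only the final metric, not its conformal class, is constrained --- choosing $g_0$ so that the path \eqref{eqstar} meets no degenerate linearization, and then closing $T$ by the critical-exponent estimates. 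Forcing $1\in T$ produces a conformal metric with $R=f$, which together with the negative case exhausts all $f\in C^\infty(M)$.
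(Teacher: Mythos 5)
The paper itself offers no proof of this statement --- it is quoted from Kazdan--Warner \cite{KW75} --- so your proposal has to be measured against the argument in that reference, and it diverges from it in a way that exposes a fatal gap. The gap is at the very start of your continuity scheme. You define $T$ as the set of $t$ for which the \emph{conformal} equation $-c_n\Delta_{g_0}u+R_{g_0}u=f_t\,u^{p-1}$, $u>0$, is solvable, and then invoke Theorem \ref{th2} to conclude $[0,t_0)\subset T$. But Theorem \ref{th2} produces \emph{some} metric on $M$ with scalar curvature $f_t$; it says nothing about a metric in the conformal class of $g_0$, which is what membership in $T$ requires. In fact no such conformal solution can exist for small $t$: since $C>\max_M f$, the function $f_t=f-(1-t)C$ is negative \emph{everywhere} for $t$ near $0$, while the sign of the first eigenvalue of the conformal Laplacian is a conformal invariant, so after your normalization every metric $u^{p-2}g_0$ has $\lambda_1(L)>0$; testing the Rayleigh quotient with the constant function gives $\lambda_1\le \mathrm{Vol}^{-1}\int R\,dV$, so the scalar curvature of any such metric must be positive somewhere. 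Hence $T$ contains no $t$ with $f_t<0$ everywhere, in particular no neighborhood of $0$, and the method of continuity never gets started. This is not a repairable technicality: it shows that realizability by an arbitrary metric (Theorem \ref{th2}) cannot be conflated with solvability of the conformal equation in a fixed positive-Yamabe class.

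Even if the path were started correctly (say at $f_0=R_{g_0}$, $u\equiv1$), the closedness step remains unproved and is not provable by the means you indicate. At the critical exponent $p-1=\frac{n+2}{n-2}$, a priori bounds for variable positive $f_t$ genuinely fail (bubbling), and on the round sphere the Kazdan--Warner identity shows that some positive functions are \emph{never} scalar curvatures of conformal metrics; so a purely conformal argument cannot prove the theorem. Your proposed remedy --- a preliminary non-conformal perturbation of $g_0$ so that ``the path meets no degenerate linearization'' --- addresses openness (invertibility of linearizations), not compactness, and no argument is given that such a perturbation exists or that it restores a priori bounds; prescribing an arbitrary positive function within a fixed conformal class is obstructed on the sphere and unresolved in general even today. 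Kazdan and Warner avoid all of this by a different mechanism: (i) an approximation lemma showing that a function $h$ lies in the $L^p$-closure of the diffeomorphism orbit $\{f\circ\phi\}$ whenever $\min f\le h\le\max f$; (ii) an implicit-function-theorem step solving the conformal equation only \emph{perturbatively}, near a background metric whose linearized operator $(n-1)\Delta+R$ has trivial kernel; and (iii) a construction, from the positive-scalar-curvature hypothesis, of nondegenerate background metrics whose scalar curvature has range inside $[\min f,\max f]$. The diffeomorphism freedom in (i) is precisely what circumvents the conformal obstruction your scheme collides with, and it removes any need for critical-exponent blow-up analysis.
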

\begin{proof}[Proof of Theorem \ref{th3}:]
Let $\gamma:[0,\infty)\rightarrow M$ be a geodesic ray emanating from $p\in M$ and parametrized by arc length $t$ such that $f\geq\delta>0$ along $\gamma$. Suppose $M$ admits a Riemannian metric with positive scalar curvature. Then from Theorem \ref{th1}, there exists a Riemannian metric $g$ with $f$ as its scalar curvature $R$ satisfying the equation (\ref{eq1}) for $\rho\leq 0$. Then along $\gamma$ we have
$$\frac{1}{2}\mathsterling_Xg(\gamma',\gamma')=g(\nabla_{\gamma'}X,\gamma')=\frac{d}{dt}[g(X,\gamma')].$$
Thus from (\ref{eq1}) the above equation implies
$$\int_{0}^{t_0}(R-\rho)_{\gamma}g(\gamma',\gamma')dt=\int_{0}^{t_0}\frac{d}{dt}[g(X,\gamma')]dt=g(X_{\gamma(t_0)},\gamma'(t_0))-g(X_p,\gamma'(0)).$$
Now using the Cauchy-Schwarz inequality, we get
$$\int_{0}^{t_0}R_{\gamma}dt\leq \|X_{\gamma(t_0)}\|-g(X_p,\gamma'(0))+\rho t_0.$$
Since $M$ is compact, there exists a scalar $K$ such that $\|X\|\leq K$. Hence, the above equation yields
$$\int_{0}^{t_0}R_{\gamma}dt\leq K-g(X_p,\gamma'(0))+\rho t_0.$$
Again taking limit on both side, we obtain
$$\lim_{t_0\rightarrow\infty}\int_{0}^{t_0}R_{\gamma}dt\leq K-g(X_p,\gamma'(0))+\lim_{t_0\rightarrow\infty}\rho t_0.$$
Now $\rho\leq 0$ implies that $ \lim_{t_0\rightarrow\infty}\rho t_0\leq 0$. Thus, we have 
\begin{equation}\label{eq2}
\lim_{t_0\rightarrow\infty}\int_{0}^{t_0}R_{\gamma}dt\leq C(p),
\end{equation}
where $C(p)$ is a constant depends on $p$. But from the given condition, we get
$$\lim_{t_0\rightarrow\infty}\int_{0}^{t_0}R_{\gamma}dt\geq \lim_{t_0\rightarrow\infty}\int_{0}^{t_0}\delta dt=+\infty,$$
which contradicts the equation (\ref{eq2}). Therefore, $M$ does not admit any Riemannian metric with positive scalar curvature.
\end{proof}
\begin{cor}
Let $(M,g)$ be a compact Riemannian manifold of dimension $n\geq 3$ with the Riemannian metric $g$ whose scalar curvature is positive. Then there doest not exist any function $f\in C^\infty(M)$ such that $f\geq \delta>0$ on $M$ and $f$ satisfies (\ref{eq1}).
\end{cor}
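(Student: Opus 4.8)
The plan is to read the corollary as the contrapositive of Theorem \ref{th3}, so that its proof reduces to a direct invocation of that theorem and requires no fresh estimate. I would begin by supposing, to the contrary, that some $f\in C^\infty(M)$ exists with $f\geq\delta>0$ throughout $M$ and satisfying the soliton equation (\ref{eq1}) for a scalar $\rho\leq 0$; the goal is then to contradict the standing hypothesis that the given metric $g$ has positive scalar curvature.

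The steps are short. Since $M$ is compact it is geodesically complete by the Hopf--Rinow theorem, so emanating from any $p\in M$ there is a geodesic $\gamma:[0,\infty)\rightarrow M$ parametrized by arc length of exactly the kind used in the proof of Theorem \ref{th3}. Because $f\geq\delta>0$ is assumed on all of $M$, this bound holds a fortiori along $\gamma$, while $f$ continues to satisfy (\ref{eq1}); hence every hypothesis of Theorem \ref{th3} is in force. Applying Theorem \ref{th3} yields that $M$ admits no Riemannian metric of positive scalar curvature, which contradicts the existence of $g$. The contradiction shows that no such $f$ can exist.

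The only point I would handle with care — and the nearest thing to an obstacle in an otherwise immediate argument — is the production of the geodesic $\gamma:[0,\infty)\rightarrow M$ demanded by Theorem \ref{th3}. Here I would invoke compactness solely through completeness and the resulting indefinite extendability of geodesics, noting explicitly that the proof of Theorem \ref{th3} never uses a minimizing property of $\gamma$: it uses only extendability to $[0,\infty)$ together with the global lower bound $f\geq\delta$ and the Cauchy--Schwarz bound on $\|X\|$ afforded by compactness. With that observation recorded, the remainder is a verbatim application of Theorem \ref{th3}, and the proof is complete.
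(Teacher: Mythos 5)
Your proposal is correct and matches the paper's intent exactly: the paper states this corollary without proof, treating it as the immediate contrapositive of Theorem \ref{th3}, which is precisely your argument. Your added care about the meaning of ``geodesic ray'' on a compact manifold (only indefinite extendability of geodesics is used, never a minimizing property) is a worthwhile clarification, since a genuinely minimizing ray cannot exist on a compact manifold and the paper glosses over this point.
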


\begin{prop}
Let $M$ be a compact Riemannian manifold of dimension $n\geq 3$. If $f\in C^\infty(M)$ is negative somewhere and satisfies the equation (\ref{eq1}), then $f$ is non-positive along each geodesic ray.
\end{prop}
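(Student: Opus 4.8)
The plan is to reduce the statement to the integral estimate already established in the proof of Theorem~\ref{th3} and then argue by contradiction. First, since $f$ is negative somewhere on the compact manifold $M$, Theorem~\ref{th2} supplies a smooth Riemannian metric $g$ on $M$ whose scalar curvature is precisely $f$. Fixing this $g$, equation~(\ref{eq1}) holds for the associated vector field $X$ and some $\rho\le 0$, with $R=f$. Because $M$ is compact, $g$ is geodesically complete, so through any $p\in M$ and any unit direction there is a geodesic $\gamma\colon[0,\infty)\to M$ parametrized by arc length; this is the object I treat as a geodesic ray.

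Next I would transcribe, essentially verbatim, the computation in the proof of Theorem~\ref{th3}. Along $\gamma$ the geodesic equation $\nabla_{\gamma'}\gamma'=0$ together with metric compatibility gives the pointwise identity
\[
\frac{d}{dt}\,g(X,\gamma')=\tfrac12\,\mathsterling_Xg(\gamma',\gamma')=(f-\rho)\big|_{\gamma},
\]
where I have used $g(\gamma',\gamma')=1$. Integrating from $0$ to $t_0$ and applying the Cauchy--Schwarz inequality together with the compactness bound $\|X\|\le K$ yields, exactly as in~(\ref{eq2}),
\[
\int_{0}^{t_0} f_{\gamma}\,dt \le K-g(X_p,\gamma'(0))+\rho t_0 \le C(p)
\]
for every $t_0>0$, the last inequality using $\rho\le 0$. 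Thus the forward integral of $f$ along every geodesic ray stays bounded above by a constant independent of $t_0$.

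Finally I would close by contradiction. If $f$ failed to be non-positive along some ray $\gamma$, then $f$ would be positive on a nonempty portion of $\gamma$, and one must show this is incompatible with the uniform bound above. The transparent case is when $f$ is bounded below by a positive constant along the ray from some parameter on: then the left-hand side diverges to $+\infty$ as $t_0\to\infty$, directly contradicting the bound $C(p)$, precisely as in Theorem~\ref{th3}.

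I expect the main obstacle to be exactly the passage from the averaged estimate $\int_0^{t_0} f_{\gamma}\,dt\le C(p)$ to the \emph{pointwise} conclusion $f\le 0$ along $\gamma$: a bounded forward integral only controls the limiting average of $f$ and does not, by itself, forbid $f$ from being positive on a bounded stretch of the ray. To obtain genuine pointwise non-positivity one must use more than the integral bound---for instance the recurrence of geodesics on a compact manifold, so that a positive value of $f$ is revisited and contributes repeatedly to the integral, or the conformal character of $X$ imposed by~(\ref{eq1}). Making this last step rigorous, rather than the routine integral estimate, is where the real work lies.
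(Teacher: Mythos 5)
Your proposal retraces the paper's own argument step for step: invoke Theorem \ref{th2} to realize $f$ as the scalar curvature of a metric, then run the integral estimate from the proof of Theorem \ref{th3} to obtain $\int_0^{t_0} f_\gamma\,dt\le C(p)$ for all $t_0$ along every geodesic ray. The paper's proof consists of exactly these two steps followed by the sentence ``by using (\ref{eq2}), the result easily follows.'' The only difference is that you stop there and explicitly flag the remaining step --- passing from the bounded forward integral to the \emph{pointwise} conclusion $f\le 0$ along the ray --- as the real difficulty, whereas the paper asserts it without argument. So your attempt is incomplete, but the incompleteness sits precisely where the paper's own proof is incomplete.

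Your diagnosis of the gap is correct, and it is genuine --- in your write-up and in the paper alike. The bound $\int_0^{t_0} f_\gamma\,dt\le C(p)$, valid for all $t_0$, only rules out a uniform positive lower bound for $f$ along a ray (this is just Theorem \ref{th3} again); more precisely it yields $\liminf_{t\to\infty} f(\gamma(t))\le 0$. It does not forbid $f$ from being positive on bounded stretches of the ray, nor even on recurrently visited ones, because each positive contribution can be offset by intervening negative stretches while the running integral stays bounded above; for this reason the recurrence idea you float at the end would not close the gap either. Note also how strong the claimed conclusion really is: since every point of $M$ is the initial point of some geodesic ray, ``$f$ non-positive along each geodesic ray'' is equivalent to $f\le 0$ on all of $M$, a global pointwise statement that no upper bound on line integrals along rays can deliver. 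What \emph{can} be extracted honestly from the hypotheses is weaker: tracing (\ref{eq1}) gives $\operatorname{div}X=n(f-\rho)$, and integrating over the compact manifold gives $\int_M f\,dV=\rho\,\mathrm{Vol}(M)\le 0$, i.e.\ non-positivity on average; in the gradient case one could instead invoke the cited theorem of Hsu to get $f$ constant, hence $f\equiv\rho\le 0$. Absent an input of that kind, your attempt --- like the paper's proof it faithfully reproduces --- leaves the Proposition unproved as stated.
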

\begin{proof}
Since $f$ is negative somewhere, by the Theorem \ref{th2}, there is a Riemannian metric whose scalar curvature $R$ is $f$. Again $f$ satisfies the equation (\ref{eq1}), so by using (\ref{eq2}), the result easily follows.
\end{proof}
\begin{thm}\cite{HSU12}
Let $(M,g)$ be an $n$-dimensional compact Riemannian manifold satisfying Yamabe gradient soliton with $n\geq 3$. Then $(M,g)$ is of constant scalar curvature.
\end{thm}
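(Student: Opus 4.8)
The plan is to convert the gradient Yamabe soliton equation (\ref{eq5}) into an integral identity on the closed manifold $M$ whose only solution forces $R$ to be constant. Because the Hessian of $\varphi$ is a pure multiple of $g$, I would first take the trace of (\ref{eq5}) to record $\Delta\varphi = 2n(R-\rho)$; integrating this over the (boundaryless) compact $M$ already shows that $\rho$ is the average of $R$, but the real leverage comes from differentiating (\ref{eq5}) a further two times.

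First I would take the divergence of (\ref{eq5}) in the free index. The right-hand side produces $\nabla R$, while commuting covariant derivatives on the left through the Ricci identity for the one-form $d\varphi$, namely $\operatorname{div}(\nabla^2\varphi) = d(\Delta\varphi) + \operatorname{Ric}(\nabla\varphi,\cdot)$, together with $\Delta\varphi = 2n(R-\rho)$, yields the pointwise relation
\begin{equation*}
R_{ij}\nabla^j\varphi = 2(1-n)\,\nabla_i R .
\end{equation*}
This is the step that injects the dimension $n$ and pins down $\operatorname{Ric}(\nabla\varphi)$ as a multiple of $\nabla R$.

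Next I would take the divergence of this relation. Using the contracted second Bianchi identity $\nabla^i R_{ij} = \tfrac12\nabla_j R$ and feeding (\ref{eq5}) back in through $R_{ij}\nabla^i\nabla^j\varphi = 2(R-\rho)R$, I obtain a scalar identity of the shape $\tfrac12\langle\nabla R,\nabla\varphi\rangle + 2(R-\rho)R = 2(1-n)\Delta R$. Integrating over $M$ kills $\int_M \Delta R$, and I would eliminate the cross term $\int_M\langle\nabla R,\nabla\varphi\rangle$ using the independent identity $\int_M\operatorname{div}\big((R-\rho)\nabla\varphi\big)=0$, which gives $\int_M\langle\nabla R,\nabla\varphi\rangle = -2n\int_M (R-\rho)^2$. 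Combining the two, and noting $\int_M(R-\rho)R = \int_M(R-\rho)^2$ since $\int_M(R-\rho)=0$, collapses everything to $(2-n)\int_M (R-\rho)^2 = 0$. As $n\geq 3$ makes the factor $2-n$ nonzero, I conclude $R\equiv\rho$, so $(M,g)$ has constant scalar curvature.

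The main obstacle is the bookkeeping in the two differentiation steps: applying the Ricci commutation identity so that the curvature term survives with the correct coefficient, and arranging the second divergence so that the $\operatorname{Ric}(\nabla\varphi,\nabla\varphi)$ contribution reorganizes into the tractable quantities $\langle\nabla R,\nabla\varphi\rangle$ and $(R-\rho)R$. Everything after that is a clean integration by parts; the compactness (so that there are no boundary terms) and the strict inequality $n\geq 3$ are precisely what force the vanishing of $\int_M(R-\rho)^2$.
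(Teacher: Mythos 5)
Your proposal is correct, but be aware that the paper contains no proof of this statement at all: it is imported verbatim as a citation of Hsu \cite{HSU12}, so the only meaningful comparison is with the literature, not with anything internal to the paper. Every step of your argument checks out. Tracing $\tfrac12\nabla^2\varphi=(R-\rho)g$ gives $\Delta\varphi=2n(R-\rho)$, hence $\int_M(R-\rho)\,dV=0$; the divergence of the soliton equation together with $\operatorname{div}(\nabla^2\varphi)=d(\Delta\varphi)+\operatorname{Ric}(\nabla\varphi,\cdot)$ gives $R_{ij}\nabla^j\varphi=2(1-n)\nabla_iR$; a second divergence, via the contracted Bianchi identity and $R_{ij}\nabla^i\nabla^j\varphi=2(R-\rho)R$, gives $\tfrac12\langle\nabla R,\nabla\varphi\rangle+2(R-\rho)R=2(1-n)\Delta R$; and integrating, with the auxiliary identity $\int_M\langle\nabla R,\nabla\varphi\rangle\,dV=-2n\int_M(R-\rho)^2\,dV$, does collapse everything to $(2-n)\int_M(R-\rho)^2\,dV=0$, which forces $R\equiv\rho$ when $n\geq3$. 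This is essentially the standard route: the two pointwise identities you derive are exactly the core of Hsu's note and of the other proofs in the literature; your integration-by-parts finish is one clean way to close the argument. One remark: your computation proves more than you claim. Since the soliton field is conformal Killing, the Bourguignon--\'Ezin identity $\int_M\langle X,\nabla R\rangle\,dV=0$ on a compact manifold, combined with your observation that $\int_M\langle\nabla\varphi,\nabla R\rangle\,dV=-2n\int_M(R-\rho)^2\,dV$, yields the conclusion instantly --- with no dimension restriction, and, applied to the field $X$ of (\ref{eq4}) instead of $\nabla\varphi$ (where $\operatorname{div}X=n(R-\rho)$), without the gradient hypothesis either.
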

From the above Theorem, we can state the following result:
\begin{prop}
Let $M$ be a compact $n$-dimensional Riemannian manifold with $n\geq 3$ and $f\in C^\infty(M)$. If there exists a Riemannian metric $g$ whose scalar curvature is $f$ and which satisfies the gradient Yamabe soliton (\ref{eq5}), then $f$ must be constant.
\end{prop}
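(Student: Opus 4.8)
The plan is to read the conclusion directly off the theorem of \cite{HSU12} quoted immediately above, since the hypotheses of the proposition are exactly those needed to invoke it. First I would note that the standing assumptions furnish a compact $n$-dimensional Riemannian manifold $(M,g)$ with $n\geq 3$ on which the metric $g$ satisfies the gradient Yamabe soliton equation (\ref{eq5}). These are precisely the hypotheses of the cited theorem, so it applies and yields that $(M,g)$ is of constant scalar curvature.

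Next I would translate this back into a statement about $f$. By hypothesis the scalar curvature $R$ of $g$ coincides with the prescribed function $f$; since the previous step shows that $R$ is constant, it follows at once that $f$ is constant on $M$, which is exactly the assertion of the proposition.

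The only point that needs checking --- and hence the sole potential obstacle --- is that the hypotheses genuinely match, namely that ``the scalar curvature of $g$ equals $f$ and $g$ satisfies (\ref{eq5})'' is the same as the gradient Yamabe soliton condition of \cite{HSU12}; this is immediate from the defining equation (\ref{eq5}). Thus no analytic difficulty arises, and the proposition is essentially a reformulation of the cited theorem in the language of the prescribed-scalar-curvature problem. Should one instead prefer a self-contained argument, one could retrace the proof that a compact Yamabe soliton has constant scalar curvature: tracing (\ref{eq5}) gives $\Delta\varphi=2n(R-\rho)$, and integrating over the compact manifold $M$ forces $\rho$ to equal the average of $R$. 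However, establishing the full constancy of $R$ from first principles requires the second-Bianchi-identity computation carried out in \cite{HSU12}, so invoking the theorem directly is the efficient route.
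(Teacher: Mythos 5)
Your proposal is correct and matches the paper's own reasoning exactly: the paper states this proposition as an immediate consequence of the quoted theorem of \cite{HSU12}, with no further argument, just as you do by matching hypotheses and identifying $R$ with $f$. Your closing remark correctly identifies that any self-contained alternative would have to reproduce the computation in \cite{HSU12}, so the direct citation is indeed the intended route.
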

\section{Upper bound of scalar curvature}
In this section we have given a relation between the scalar curvature and volume and surface area of a geodesic ball. Let $M$ be a Riemannian manifold with a pole $p$. Then by the notation $x\rightarrow\infty$ we mean the limit $d(p,x)\rightarrow\infty$ for $x\in M$.
\begin{thm}\cite{MM12}\label{th5}
Let $(M,g)$ be a complete and non-compact gradient Yamabe soliton with $\rho\geq 0$ with scalar curvature $R$ and $\lim_{x\rightarrow\infty}R(x)\geq 0$. Then the scalar curvature $R$ of $(M,g)$ is non-negative.
\end{thm}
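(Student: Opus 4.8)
The plan is to convert the geometric soliton condition into a single scalar elliptic equation for $R$ and then run a minimum-principle argument, using the hypothesis on the behaviour at infinity to localize the minimum onto a compact region. Writing the gradient soliton equation (\ref{eq5}) in the Hessian form $\nabla^2\varphi=c(R-\rho)g$ with $c>0$ a fixed constant, I would first extract a first-order identity relating $\nabla R$ to the Ricci tensor, and then a second-order (drift-Laplacian) equation for $R$ alone.

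First I would take the divergence of the soliton equation. Using the commutation identity $\nabla^j\nabla_i\nabla_j\varphi=\nabla_i\Delta\varphi+R_{ik}\nabla^k\varphi$ together with the trace $\Delta\varphi=nc(R-\rho)$ of the soliton equation, the divergence yields
\begin{equation*}
(n-1)c\,\nabla R=-\,\mathrm{Ric}(\nabla\varphi).
\end{equation*}
Taking the divergence a second time, and invoking the contracted second Bianchi identity $\nabla^iR_{ik}=\tfrac12\nabla_kR$ and the soliton equation once more to evaluate $R_{ik}\nabla^i\nabla^k\varphi=cR(R-\rho)$, I obtain the scalar equation
\begin{equation*}
(n-1)c\,\Delta R+\tfrac12\langle\nabla\varphi,\nabla R\rangle=-\,cR(R-\rho).
\end{equation*}
Introducing the drift Laplacian $L:=(n-1)c\,\Delta+\tfrac12\langle\nabla\varphi,\nabla\,\cdot\,\rangle$, this reads $LR=cR(\rho-R)$, which is the key elliptic equation.

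The concluding step is a maximum principle. Suppose, for contradiction, that $m:=\inf_M R<0$. Since $\lim_{x\to\infty}R(x)\geq 0$, choosing $\epsilon=-m/2>0$ produces a compact set $K$ outside of which $R>-\epsilon=m/2>m$; hence the infimum $m$ is in fact attained at some interior point $x_0\in K$. At such a global minimum one has $\nabla R(x_0)=0$ and $\Delta R(x_0)\geq 0$, so $LR(x_0)\geq 0$. On the other hand, $LR(x_0)=cR(x_0)(\rho-R(x_0))$, and since $c>0$, $R(x_0)=m<0$, and $\rho-m>0$ (using $\rho\geq 0$), the right-hand side is strictly negative --- a contradiction. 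Therefore $m\geq 0$, i.e.\ $R\geq 0$ throughout $M$.

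I expect the main obstacle to be the noncompactness: a priori the infimum of $R$ need not be attained, so the bare minimum principle does not apply. The role of the hypothesis $\lim_{x\to\infty}R\geq 0$ is precisely to rule out the infimum escaping to infinity, thereby localizing it to a compact set where the minimum is genuinely realized; this is what makes the elementary (rather than Omori--Yau type) argument sufficient. The sign assumption $\rho\geq 0$ is equally essential, since it guarantees $\rho-R>0$ at a hypothetical negative minimum and thus the wrong sign for $LR$.
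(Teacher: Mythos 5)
The paper does not prove this statement at all: it is quoted verbatim from Ma--Miquel \cite{MM12}, so there is no internal proof to compare against. Judged on its own merits, your argument is correct, and it is in essence the standard (and the cited source's) argument for this result. Both structure equations you derive check out: taking the divergence of $\nabla^2\varphi=c(R-\rho)g$ and using the commutation identity $\nabla^j\nabla_i\nabla_j\varphi=\nabla_i\Delta\varphi+R_{ik}\nabla^k\varphi$ together with $\Delta\varphi=nc(R-\rho)$ gives $(n-1)c\,\nabla R=-\mathrm{Ric}(\nabla\varphi)$; a second divergence, the contracted Bianchi identity $\nabla^iR_{ik}=\tfrac12\nabla_kR$, and the soliton equation again give $(n-1)c\,\Delta R+\tfrac12\langle\nabla\varphi,\nabla R\rangle=cR(\rho-R)$. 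The endgame is also sound: the hypothesis at infinity forces a hypothetical negative infimum to be attained at some $x_0$, where $LR(x_0)\geq 0$ while $cR(x_0)\bigl(\rho-R(x_0)\bigr)=cm(\rho-m)<0$ since $m<0\leq\rho$, a contradiction. The paper's own normalization is the case $c=2$ of your computation, so your generality causes no mismatch.

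One point you should make explicit: setting $\epsilon=-m/2$ tacitly assumes $m=\inf_MR>-\infty$. This is easily repaired --- the limit hypothesis yields a compact set outside of which, say, $R>-1$, and $R$ is bounded on that compact set by continuity, so $m$ is automatically finite (and then your localization argument shows it is attained) --- but as written you use the finiteness of $m$ before establishing it.
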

\begin{thm}\cite{19}\label{th4}
Let $B_R(p)$ be a geodesic ball in a complete Riemannian manifold $M$. Suppose that the sectional curvature $K_M\leq k$ for some constant $k$ and $R<inj(M,g)$. Then for any real valued smooth function $f$ with $\Delta f\geq 0$ and $f\geq 0$ on $M$,
\begin{equation}
f(p)\leq \frac{1}{V_k(R)}\int_{B_R(p)}f dV,
\end{equation}
where $V_k(R)=\text{Vol}(B_R,g_k)$ is the volume of a ball of radius $R$ in the space form of constant curvature $k$, $dV$ is the volume form and $inj(M,g)$ is the injective radius of $M$.
\end{thm}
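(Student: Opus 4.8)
The plan is to prove this sub-mean value inequality by a monotonicity argument on geodesic spheres, comparing the geometry of $M$ with that of the model space form $M_k^n$ of constant curvature $k$. Since $R<\mathrm{inj}(M,g)$, geodesic polar coordinates centered at $p$ form a diffeomorphism onto $B_R(p)\setminus\{p\}$, so I may write the volume element as $dV=\mathcal{A}(r,\theta)\,dr\,d\theta$, where $\mathcal{A}(r,\theta)$ is the area density of the geodesic sphere of radius $r$ in the direction $\theta$; crucially, there is no cut locus inside $B_R(p)$ to obstruct this.

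First I would assemble the comparison input. Writing $\mathrm{sn}_k(r)$ for the solution of $u''+ku=0$ with $u(0)=0,\ u'(0)=1$, the model sphere of radius $r$ in $M_k^n$ has area $A_k(r)=\omega_{n-1}\,\mathrm{sn}_k(r)^{n-1}$, and the model ball has volume $V_k(R)=\int_0^R A_k(r)\,dr$. The hypothesis $K_M\le k$ is exactly what drives the Hessian (equivalently, G\"unther) comparison theorem, yielding the pointwise lower bound on the logarithmic derivative of the area density,
\begin{equation}
\frac{\partial_r\mathcal{A}(r,\theta)}{\mathcal{A}(r,\theta)}=\Delta r\ \ge\ (n-1)\,\frac{\mathrm{sn}_k'(r)}{\mathrm{sn}_k(r)}=\frac{A_k'(r)}{A_k(r)},
\end{equation}
valid for all $0<r<R$ and every direction $\theta$. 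This is the one place where the upper curvature bound is used, and it runs in the opposite direction to the familiar Bishop--Gromov estimate, which instead exploits a Ricci lower bound.

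Next I would introduce the spherical integral $S(r)=\int_{\partial B_r(p)}f\,dA$ and study the normalized quantity $g(r)=S(r)/A_k(r)$. Differentiating $S$ under the integral sign and applying the divergence theorem together with $\Delta f\ge 0$ gives $\int_{\partial B_r}\partial_r f\,dA=\int_{B_r}\Delta f\,dV\ge 0$, whence
\begin{equation}
S'(r)\ \ge\ \int_{\partial B_r(p)}f\,\frac{\partial_r\mathcal{A}}{\mathcal{A}}\,dA\ \ge\ \frac{A_k'(r)}{A_k(r)}\int_{\partial B_r(p)}f\,dA=\frac{A_k'(r)}{A_k(r)}\,S(r),
\end{equation}
where the second inequality combines $f\ge 0$ with the comparison bound above. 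This is precisely $g'(r)\ge 0$, so $g$ is nondecreasing on $(0,R]$. Since $\mathcal{A}(r,\theta)/A_k(r)\to 1/\omega_{n-1}$ as $r\to 0$, continuity of $f$ forces $\lim_{r\to 0}g(r)=f(p)$, and monotonicity then gives $S(r)\ge f(p)\,A_k(r)$ for every $r\in(0,R]$.

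Finally I would integrate in $r$ via the coarea formula, obtaining
\begin{equation}
\int_{B_R(p)}f\,dV=\int_0^R S(r)\,dr\ \ge\ f(p)\int_0^R A_k(r)\,dr=f(p)\,V_k(R),
\end{equation}
which rearranges to the claimed estimate. I expect the main obstacle to be the comparison step: one must invoke the G\"unther/Hessian comparison theorem in the correct direction and keep careful track that its validity rests on staying within the injectivity radius, $r<\mathrm{inj}(M,g)$. The remaining work is a routine monotone-quantity argument together with the coarea formula and the asymptotics of $f$ and of the area density near $p$.
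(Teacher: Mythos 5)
The paper does not prove this statement at all: it is quoted as a known result with a citation to Schoen--Yau \cite{19}, so there is no internal proof to compare against. Judged on its own merits, your argument is correct and is essentially the standard proof of this sub-mean-value inequality: the Hessian comparison $\Delta r\geq (n-1)\,\mathrm{sn}_k'(r)/\mathrm{sn}_k(r)$ under $K_M\leq k$ (legitimate here because $r<inj(M,g)$ keeps $r$ smooth), the divergence theorem applied to $\Delta f\geq 0$, the resulting monotonicity of $r\mapsto\bigl(\int_{\partial B_r(p)}f\,dA\bigr)/A_k(r)$, the limit $f(p)$ as $r\to 0$, and integration in $r$. The only caveat worth recording is that for $k>0$ the model quantities $\mathrm{sn}_k$, $A_k(r)$ and $V_k(R)$ behave as you use them only for radii below $\pi/\sqrt{k}$, so your ``valid for all $0<r<R$'' implicitly requires $R\leq\pi/\sqrt{k}$; this restriction is inherited from the statement of the theorem itself (which needs it for $V_k(R)$ to make sense) rather than being a defect of your proof.
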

\begin{thm}
Let $(M,p)$ be a complete non-compact Riemannian manifold with a pole $p$ and the sectional curvature $K_M\leq k$ for some constant $k$ with scalar curvature $R$ satisfying $\Delta R\geq 0$. If the Riemannian metric $g$  satisfies the steady gradient Yamabe soliton
\begin{equation}\label{eq3}
\frac{1}{2}\nabla^2 \varphi=Rg,
\end{equation}
where $\varphi\in C^2(M)$ is a convex function, then
$$2R(p)\leq \varphi(x'')\frac{Vol(\partial B_p(R'))}{Vol_k(B_p(R))}-\varphi(x_0)\frac{Vol(\partial B_p(R))}{Vol_k(B_p(R))},$$
where $R<R'$, $x''\in Vol(\partial B_p(R'))$, $x_0\in Vol(\partial B_p(R))$  and $B_R(p)$ is a geodesic ball with center at $p$ and radius $R$.
\end{thm}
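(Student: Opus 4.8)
The plan is to combine the mean value inequality of Theorem \ref{th4} with the soliton equation (\ref{eq3}) and the convexity of $\varphi$. First I would record two structural consequences of the hypotheses. Since $p$ is a pole, $\exp_p$ is a diffeomorphism of $T_pM$ onto $M$, so the injectivity radius is infinite and the requirement $R<inj(M,g)$ in Theorem \ref{th4} is automatically satisfied for every radius $R$. Next, the convexity of $\varphi$ forces its Hessian to be positive semidefinite: expanding $\varphi(\exp_p sv)-\varphi(p)$ to second order in the defining inequality $\langle grad\,\varphi,v\rangle_p\leq\varphi(\exp_p v)-\varphi(p)$ leaves $\nabla^2\varphi(v,v)\geq 0$. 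Because (\ref{eq3}) reads $\nabla^2\varphi=2Rg$, this is exactly the statement $R\geq 0$ (the same conclusion follows from Theorem \ref{th5} in the steady case $\rho=0$). Thus $R$ is a non-negative function with $\Delta R\geq 0$, which is precisely what Theorem \ref{th4} needs.

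Second, I would apply Theorem \ref{th4} to $f=R$ on the ball $B_p(R)$, obtaining
$$R(p)\leq \frac{1}{Vol_k(B_p(R))}\int_{B_p(R)}R\,dV.$$
The next move is to convert the volume integral of $R$ into boundary data using the soliton. Taking the trace of (\ref{eq3}) gives $\Delta\varphi=2nR$, so that $2R$ is, up to the dimensional constant, the Laplacian of $\varphi$, and the divergence theorem rewrites $\int_{B_p(R)}\Delta\varphi\,dV$ as the radial flux $\int_{\partial B_p(R)}\langle grad\,\varphi,\nu\rangle\,dA$ through the geodesic sphere, where $\nu$ is the outward unit normal, namely the radial field $\gamma'$.

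Third, I would estimate this flux by the convexity inequality. Along each radial geodesic $\gamma$ from $p$ the function $t\mapsto\varphi(\gamma(t))$ has second derivative $\nabla^2\varphi(\gamma',\gamma')=2R\geq 0$, hence is convex, so its radial derivative at radius $R$ is dominated by the secant slope reaching the concentric sphere of radius $R'>R$; this bounds $\langle grad\,\varphi,\nu\rangle$ at a point $x_0\in\partial B_p(R)$ in terms of $\varphi(x'')-\varphi(x_0)$, where $x''\in\partial B_p(R')$ is the endpoint of the prolonged radial geodesic. Pushing the $R$-sphere integral forward along these radial geodesics to the $R'$-sphere converts the area measure into that of $\partial B_p(R')$, and the mean value theorem for integrals then replaces the two sphere integrals of $\varphi$ by single point values weighted by $Vol(\partial B_p(R'))$ and $Vol(\partial B_p(R))$. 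Dividing by $Vol_k(B_p(R))$ produces the asserted inequality.

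I expect the main obstacle to lie in the bookkeeping of the third step: controlling the radial flux of $grad\,\varphi$ uniformly over the sphere and matching it exactly to the stated combination of surface areas and point values of $\varphi$. In particular, reconciling the dimensional constant from the trace $\Delta\varphi=2nR$ with the clean factor $2R(p)$ on the left, and absorbing the scale factor $R'-R$ generated by the convexity comparison, are the delicate points; the non-negativity $R\geq 0$, which makes $\varphi\circ\gamma$ genuinely convex, is exactly what orients the comparison in the required direction.
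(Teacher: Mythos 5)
Your proposal is correct in outline and follows the same skeleton as the paper's proof: trace (\ref{eq3}), convert $\int_{B_p(R)}R\,dV$ into the boundary flux of $grad\,\varphi$ by the divergence theorem, bound that flux by convexity, and close with the mean value inequality of Theorem \ref{th4}. Two points genuinely differ, and both are instructive. First, you derive $R\geq 0$ by a second-order expansion of the convexity inequality, getting $\nabla^2\varphi\geq 0$ and hence $R\geq 0$ directly from (\ref{eq3}); the paper instead invokes Theorem \ref{th5}, justifying it by ``$\lim_{x\rightarrow\infty}R(x)\geq 0$'' --- a hypothesis of Theorem \ref{th5} that is nowhere assumed in the theorem being proved. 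Your route repairs that gap and is self-contained. Second, for the flux bound the paper applies its stated definition of convexity with $v=\eta$ the unit outward normal, obtaining $\langle grad\,\varphi,\eta\rangle_x\leq\varphi(exp_x\eta)-\varphi(x)$ with no scale factor, so that $\{exp_x\eta: x\in\partial B_p(R)\}=\partial B_p(R')$ with $R'=R+1$; your secant-slope comparison is the standard-convexity analogue but carries the factor $1/(R'-R)$ you flag as unabsorbed --- it disappears exactly when you fix $R'=R+1$, i.e.\ comparison point $exp_x\eta$, so you should make that choice rather than leave $R'$ arbitrary. Finally, your worry about the dimensional constant is well founded, but it cuts against the paper: the trace gives $\Delta\varphi=2nR$, so the argument really yields $2nR(p)\leq\cdots$, and the paper silently drops the $n$; since $R(p)\geq 0$ the stated weaker bound $2R(p)\leq\cdots$ still follows. (Both you and the paper also leave informal the replacement of $Vol(\partial B_p(R))$ by $Vol(\partial B_p(R'))$ multiplying the positive term, which needs monotonicity of sphere areas from the pole together with $\varphi(x'')\geq 0$.)
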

\begin{proof}
Taking trace in both sides of (\ref{eq3}), we get
$$\frac{1}{2}\Delta \varphi=Rn.$$
Now taking $\eta$ as the unit outward normal vector along $\partial B_p(R)$ and integrating in $B_p(R)$ an then using divergence theorem and applying convexity property of $\varphi$, we have
\begin{eqnarray*}
\int_{B_p(R)}R dV&=&\frac{1}{2}\int_{B_p(R)}\Delta \varphi dV\\
&=&\frac{1}{2}\int_{\partial B_p(R)}\left\langle X,\eta \right\rangle dS\\
&\leq & \frac{1}{2}\int_{\partial B_p(R)}\{\varphi(exp_x\eta)-\varphi(x)\}dS\\
&\leq & \frac{1}{2}\{\varphi(exp_{x'}\eta)-\varphi(x_0)\}Vol(\partial B_p(R)),
\end{eqnarray*}
where $\varphi(x_0)=\inf\{\varphi(x):x\in\partial B_p(R)\}$ and $\varphi(exp_{x'}\eta)=\sup\{\varphi(exp_x\eta):x\in\partial B_p(R)\}$.
Since $\eta$ is the unit outward normal along $\partial B_p(R)$, the set $\{exp_x\eta:x\in\partial B_p(R)\}=\partial B_p(R')$ for some $R'>R$.
Also, $Vol(\partial B_p(R))\leq Vol(\partial B_p(R'))$. So the above equation reduces to
$$2\int_{B_p(R)}R dV\leq \varphi(x'')Vol(\partial B_p(R'))-\varphi(x_0)Vol(\partial B_p(R)),$$ where $x''=exp_{x'}\eta$. Since $\lim_{x\rightarrow\infty} R(x)\geq 0$, it implies that $R\geq 0$ by using Theorem \ref{th5}. Now using Theorem \ref{th4}, we get
$$2Vol_k(B_p(R))R(p)\leq  \varphi(x'')Vol(\partial B_p(R'))-\varphi(x_0)Vol(\partial B_p(R)).$$
Rearranging the above inequality, we get 
\begin{equation}
2R(p)\leq \varphi(x'')\frac{Vol(\partial B_p(R'))}{Vol_k(B_p(R))}-\varphi(x_0)\frac{Vol(\partial B_p(R))}{Vol_k(B_p(R))}.
\end{equation}
Hence, we get the required inequality.
\end{proof}

\end{document}